\theoremstyle{plain}
\newtheorem{theorem}{Theorem}[section]
\newtheorem{maintheorem}{Theorem}
\Crefname{maintheorem}{theorem}{theorems}
\Crefname{maintheorem}{Theorem}{Theorems}
\newtheorem{maincorollary}[maintheorem]{Corollary}
\newtheorem{proposition}[theorem]{Proposition}
\newtheorem{lemma}[theorem]{Lemma}
\theoremstyle{definition}
\newtheorem{definition}[theorem]{Definition}
\newtheorem{remark}[theorem]{Remark}
\newcommand{\ZZ}{\mathbb{Z}}
\newcommand{\BI}{\mathcal{BI}}
\DeclareMathOperator{\Aut}{Aut}
\DeclareMathOperator{\GL}{GL}
\DeclareMathOperator{\SL}{SL}
\DeclareMathOperator{\Sp}{Sp}
\DeclareMathOperator{\spm}{\mathfrak{sp}}
\title{Generators for the level $m$ congruence subgroups of braid groups}
\author{Ishan Banerjee}
\address{Ishan Banerjee: Department of Mathematics, Ohio State University, 100 Math Tower, 231 W 18th Ave, Columbus, OH 43210}
\email{banerjee.238@osu.edu}
\author{Peter Huxford}
\address{Peter Huxford: Department of Mathematics, University of Chicago, 5734 S. University Ave., Chicago, IL 60637}
\email{pjhuxford@uchicago.edu}
\begin{document}

\maketitle

\begin{abstract}
  We prove for $m\geq1$ and $n\geq5$ that the level $m$ congruence subgroup $B_n[m]$ of the braid group $B_n$ associated to the integral Burau representation $B_n\to\GL_n(\ZZ)$ is generated by $m$th powers of half-twists and the braid Torelli group. This solves a problem of Margalit, generalizing work of Assion, Brendle--Margalit, Nakamura, Stylianakis and Wajnryb.
\end{abstract}

\section{Introduction}
Let $m\geq1$, $n\geq2$, and let $B_n$ be the braid group on $n$ strands. Our main result relates three important subgroups of $B_n$ that we define below.

\subsection*{The half-twist $\boldsymbol{m}$th power subgroup}
Let $\sigma_1,\ldots,\sigma_{n-1}$ be the standard generators of $B_n$. A \emph{half-twist} is any element of $B_n$ that is conjugate to $\sigma_1$. Note that the standard generators of $B_n$ are all half-twists. We define the \emph{half-twist $m$th power subgroup} $B_n^m$ to be the normal closure of $\sigma_1^m$ in $B_n$:
\[
  B_n^m \coloneqq \langle \sigma^m : \sigma\in B_n \text{ is a half-twist} \rangle.
\]
This conflicts with the common notation $G^m$, which typically denotes the subgroup of $G$ generated by all its $m$th powers. As such, we do not make use of this more common notation in our paper.

\subsection*{The braid Torelli group}
The \emph{unreduced Burau representation} is the homomorphism $B_n\to\GL_n(\ZZ[t,t^{-1}])$ defined by $\sigma_i \mapsto I_{i-1} \oplus \begin{psmallmatrix} 1 - t & 1 \\ t & 0 \end{psmallmatrix} \oplus I_{n-i-1}$, $1\leq i<n$, where $I_j$ is the $j\times j$ identity matrix. The \emph{integral Burau representation} is the homomorphism $\rho_n\colon B_n\to\GL_n(\ZZ)$ obtained by specializing the unreduced Burau representation at $t=-1$. The \emph{braid Torelli group} $\BI_n$ is the kernel
\[
  \BI_n \coloneqq \ker\left(B_n\xrightarrow{\rho_n}\GL_n(\ZZ)\right).
\]
The Burau representation is often defined using the transposes of the matrices we use, however this difference does not affect any of the subgroups we define.

\subsection*{The level $\boldsymbol{m}$ congruence subgroup}
By post-composing the integral Burau representation with the mod-$m$ reduction map $\GL_n(\ZZ)\to\GL_n(\ZZ/m\ZZ)$, we obtain a homomorphism $B_n\to\GL_n(\ZZ/m\ZZ)$. Its kernel $B_n[m]$ is the \emph{level $m$ congruence subgroup} of $B_n$:
\[
  B_n[m] \coloneqq \ker\left(B_n\xrightarrow{\rho_n}\GL_n(\ZZ)\xrightarrow{\bmod m}\GL_n(\ZZ/m\ZZ)\right).
\]

We are now equipped to state our main theorem.

\begin{maintheorem}\label{maintheorem}
  Suppose $m\geq1$, $n\geq2$, and $m<6$ if $n=3,4$. The level $m$ congruence subgroup $B_n[m]$ of $B_n$ is generated by the braid Torelli group $\BI_n$ and $B_n^m$:
  \[
    B_n[m] = \langle \BI_n, B_n^m \rangle.
  \]
\end{maintheorem}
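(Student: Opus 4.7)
The plan is as follows. First I would dispatch the easy containment $\langle\BI_n, B_n^m\rangle\subseteq B_n[m]$. By definition $\BI_n\subseteq B_n[m]$, and for any half-twist $\sigma$ the matrix $\rho_n(\sigma)$ is conjugate to $\rho_n(\sigma_1)$, which has the form $I_n + N$ for a rank-one matrix $N$ satisfying $N^2 = 0$. Consequently $\rho_n(\sigma^m) = I_n + mN \equiv I_n \pmod{m}$, so every $m$th power of a half-twist lies in $B_n[m]$, and $B_n^m\subseteq B_n[m]$ follows by normality of $B_n[m]$.

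For the reverse inclusion the natural move is to quotient by $\BI_n$. Writing $\Gamma_n \coloneqq \rho_n(B_n) \cong B_n/\BI_n$ and $\Gamma_n[m] \coloneqq \Gamma_n \cap \ker(\GL_n(\ZZ)\to\GL_n(\ZZ/m\ZZ))$, the theorem becomes the assertion that $\Gamma_n[m]$ is the normal closure in $\Gamma_n$ of the single element $\rho_n(\sigma_1)^m$. My plan for this reduced statement would be to invoke the classical identification of $\Gamma_n$ with (a finite-index subgroup of) a symplectic or orthogonal arithmetic group preserving a bilinear form on $\ZZ^n$, after Squier, A'Campo, and Arnol'd; under such an identification $\rho_n(\sigma_1)$ corresponds to a transvection of a specific type. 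The question then becomes arithmetic: is the level $m$ principal congruence subgroup of $\Sp_{2g}(\ZZ)$ (or of the relevant orthogonal group) normally generated by the $m$th power of a single transvection? For $n\geq 5$, where the relevant rank is $g\geq 2$, this is by now classical, reducing to congruence-subgroup generation results in the tradition of Mennicke, Bass--Milnor--Serre, and Vaserstein. The cases $n\in\{3,4\}$ land in rank-one territory akin to $\SL_2(\ZZ)$, where normal generation by $m$th powers of transvections becomes delicate once $m\geq 6$; this is precisely the source of the hypothesis on $m$ in those dimensions.

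The main obstacles I anticipate are threefold. Item (a): pinning down $\Gamma_n$ exactly as a subgroup of $\GL_n(\ZZ)$, rather than only up to finite index, so that the normal closure of $\rho_n(\sigma_1)^m$ taken inside $\Gamma_n$ actually exhausts $\Gamma_n[m]$; standard classical-group generation theorems are stated for the full arithmetic group, and transferring them to $\Gamma_n$ typically demands extra input such as a congruence subgroup property or an explicit index computation. Item (b): checking that the images of half-twists sweep out a full $\Gamma_n$-conjugacy class of transvections, which rests on transitivity properties of the $B_n$-action on relevant arcs or primitive vectors. Item (c): handling $n = 3, 4$ by a more hands-on analysis using explicit presentations of the small congruence subgroups involved. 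Of these, (a) is almost certainly the deepest, and it is where I would expect most of the technical effort to concentrate.
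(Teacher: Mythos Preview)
Your high-level strategy matches the paper's: reduce to showing $\Gamma_n[m]=\Gamma_n^m$ inside $\Sp(V_n)$, then invoke Mennicke-type generation of congruence subgroups by powers of transvections. You also correctly single out obstacle (a) as the crux. However, your proposed resolutions of (a) and (b) are where the plan breaks down.

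For (b), the hoped-for transitivity actually fails: the $\Gamma_n$-orbit of $c_1$ does \emph{not} contain every primitive vector of $W_n$ (Wajnryb's result, \Cref{wajnryb-prop} in the paper, shows the orbit is governed by the class mod $2W_n$, and there are several such classes). Hence $\Gamma_n^m$ need not contain $T_x^m$ for arbitrary $x\in W_n$, and Mennicke's theorem for $\Sp_{2g}(\ZZ)[m]$ cannot be transferred to $\Gamma_n$ by a conjugacy argument alone. For (a), the paper does not use the congruence subgroup property or an index computation; neither route seems to close the gap. Instead the paper introduces a factor-of-two trick: first prove $\Gamma_n[m]=\langle\Gamma_n^m,\Gamma_n[2m]\rangle$ (\Cref{level-m-from-level-2m-and-mth-powers}, via the Newman--Smart isomorphism $\Sp[\ell]/\Sp[2\ell]\cong\spm(\FF_2)$ and a Chinese-remainder reduction to $m$ a power of $2$), so it suffices to show $\Gamma_n[2m]\le\Gamma_n^m$. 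This weaker target only requires $T_x^{2m}\in\Gamma_n^m$ for all $x\in W_n$, and the squaring is exactly what sidesteps the failure of transitivity: the identity $T_x^{2}T_{x+y}^{2}=T_{2x+y}T_y$ for $\langle x,y\rangle=0$ lets one manufacture $T_x^{2m}$ from $m$th powers of transvections along vectors that \emph{do} lie in the $\Gamma_n$-orbit of $c_1$ (those congruent to $c_{n-1}$ mod $2W_n$). This pair of ideas---the level-$2m$ bridge and the squared-transvection identity---is the missing substance behind your obstacle (a), and without them the outline does not go through.
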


The equality $B_n[m]=\langle\BI_n,B_n^m\rangle$ does not hold when $n=3,4$ and $m\geq6$; in fact $\langle\BI_n,B_n^m\rangle$ is an infinite index subgroup of $B_n$ in these cases (see \Cref{infinite-index}).

We cannot omit the group $\BI_n$ from \Cref{maintheorem} in general. We discuss when $B_n[m]=B_n^m$ in \Cref{coxeter-remark}.

Brendle, Margalit, and Putman prove that $\BI_n$ is generated by the squares of Dehn twists about curves that surround 3 or 5 points \cite[Theorem C]{BMP15}. Combining their result with \Cref{maintheorem} we obtain a natural generating set of $B_n[m]$, solving a problem of Margalit \cite[Problem 3.5]{Mar19} for braid groups on at least five strands. In particular, we find that $B_n[m]$ for $n\geq5$ can be normally generated by three elements.

\begin{maincorollary}\label{maincorollary}
  Suppose $m\geq1$ and $n\geq2$. The level $m$ congruence subgroup $B_n[m]$ of $B_n$ is normally generated by the following braids:
  \begin{center}
    \begin{tabular}{@{$\bullet$ }ll}
    $\sigma_1^m$ & if $n=2$, \\
    $\sigma_1^m$, and $(\sigma_1\sigma_2)^6$ & if $n=3,4$ and $m<6$, \\
    $\sigma_1^m$, $(\sigma_1\sigma_2)^6$, and $(\sigma_1\sigma_2\sigma_3\sigma_4)^{10}$ & if $n\geq5$.
  \end{tabular}
  \end{center}
\end{maincorollary}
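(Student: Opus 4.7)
The plan is to leverage \Cref{maintheorem} together with the Brendle--Margalit--Putman generating set for $\BI_n$ to reduce the corollary to identifying a few explicit braids as normal generators in $B_n$.

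The base case $n=2$ is an isolated direct computation: $B_2=\ZZ$ is abelian, so normal closure coincides with the subgroup generated. One checks $\rho_2(\sigma_1)=I+N$ where $N=\bigl(\begin{smallmatrix}1&1\\-1&-1\end{smallmatrix}\bigr)$ is nilpotent with $N^2=0$, so $\rho_2(\sigma_1^k)=I+kN$ and hence $B_2[m]=\langle\sigma_1^m\rangle$.

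For $n\geq 3$ (with $m<6$ if $n=3,4$), I would apply \Cref{maintheorem} to write $B_n[m]=\langle\BI_n,B_n^m\rangle$, so it suffices to normally generate each factor in $B_n$. By definition $B_n^m$ is the normal closure of $\sigma_1^m$ in $B_n$, so attention focuses on $\BI_n$. By \cite[Theorem~C]{BMP15}, $\BI_n$ is generated by the squares of Dehn twists about simple closed curves in $D_n$ bounding disks that contain $3$ or $5$ punctures. I would then invoke two standard facts: identifying $B_n$ with $\mathrm{Mod}(D_n,\partial D_n)$, this group acts transitively on isotopy classes of simple closed curves enclosing a fixed number of punctures; and the central element $\Delta_k^2=(\sigma_1\sigma_2\cdots\sigma_{k-1})^k$ of $B_k\hookrightarrow B_n$ is the Dehn twist about the boundary of a regular neighborhood of the first $k$ punctures. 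Squaring these for $k=3,5$ gives $(\sigma_1\sigma_2)^6$ and $(\sigma_1\sigma_2\sigma_3\sigma_4)^{10}$, which are therefore $B_n$-conjugate to every BMP15 generator of the corresponding type.

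The case split then falls out of the dimensional restriction on $D_n$: for $n=3,4$ no $5$-bounding curve exists in $D_n$, so only $(\sigma_1\sigma_2)^6$ is needed to normally generate $\BI_n$; for $n\geq5$ both elements are required. The main obstacle is essentially bookkeeping — verifying the identification of the stated braids as squared boundary Dehn twists of $3$- and $5$-punctured subdisks, and confirming transitivity of the $B_n$-action on the relevant curve systems — but none of this involves ideas beyond standard braid/mapping class group facts. The substantive content of the corollary is packaged entirely inside \Cref{maintheorem} and \cite[Theorem~C]{BMP15}.
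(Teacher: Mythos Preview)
Your proposal is correct and follows exactly the approach the paper intends: the paper derives this corollary directly from \Cref{maintheorem} combined with \cite[Theorem~C]{BMP15}, with the stated braids arising as squared Dehn twists about standard $3$- and $5$-punctured subdisks and the $B_n$-conjugacy handled by change of coordinates. Your explicit treatment of the $n=2$ case via the nilpotent matrix computation is a clean way to verify what the paper records in \Cref{coxeter-remark}.
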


For $n\geq5$, \Cref{maintheorem} is an analog of Mennicke's result \cite[Section 10]{Men65} that for $m\geq1$ and $g\geq2$ the principal level $m$ congruence subgroup $\Sp_{2g}(\ZZ)[m]$ of the integral symplectic group $\Sp_{2g}(\ZZ)$ is generated by $m$th powers of transvections. Indeed, A'Campo \cite{AC79} showed that $\rho_{2g+1}(B_{2g+1})$ may be regarded as a proper, finite index subgroup of $\Sp_{2g}(\ZZ)$ for $g\geq2$, with a similar result for $\rho_{2g+2}$. This finite index difference is why \Cref{maintheorem} does not follow from Mennicke's result, although it does play a crucial role in our proof.

The situation for $n=3,4$ is more delicate; this is related to the failure of $\SL_2(\ZZ)$ to satisfy the congruence subgroup property. Let $\bar{\rho}_3\colon B_3\to\SL_2(\ZZ)$ denote the surjection defined by $\sigma_1\mapsto\begin{psmallmatrix}1&1\\0&1\end{psmallmatrix}$ and $\sigma_2\mapsto\begin{psmallmatrix}1&0\\-1&1\end{psmallmatrix}$. (The connection between $\bar{\rho}_3$ and $\rho_3$ is explained by \Cref{burau-as-symplectic}.) We give a partial answer to Margalit's problem on $B_n[m]$ for $n=3,4$ and $m\geq6$ \cite[Problem 3.5]{Mar19}.

\begin{maintheorem}\label{maintheorem-for-3-and-4}
  Let $m\geq1$, and $R\subseteq B_3[m]$ be such that the normal closure of $\bar{\rho}_3(R)$ in $\SL_2(\ZZ)$ contains $\SL_2(\ZZ)[2m]$. Then, for $n=3,4$, the level $m$ congruence subgroup $B_n[m]$ is normally generated in $B_n$ by $R$, $\sigma_1^m$, and $(\sigma_1\sigma_2)^6$.
\end{maintheorem}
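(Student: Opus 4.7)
The plan is to prove the $n=3$ case first and reduce the $n=4$ case to it. For $n=3$, let $N$ denote the normal closure in $B_3$ of $R\cup\{\sigma_1^m,(\sigma_1\sigma_2)^6\}$. The inclusion $N\subseteq B_3[m]$ is immediate: $R\subseteq B_3[m]$ by hypothesis, $\sigma_1^m\in B_n^m\subseteq B_n[m]$ since it is an $m$th power of a half-twist, and $(\sigma_1\sigma_2)^6\in\ker\rho_3$ by direct calculation. The substantial direction is $B_3[m]\subseteq N$.

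The key reduction uses the classical fact that $\ker\bar{\rho}_3$ is the infinite cyclic central subgroup of $B_3$ generated by $(\sigma_1\sigma_2)^6$. In particular $\ker\bar{\rho}_3\subseteq N$, so it suffices to show $\bar{\rho}_3(B_3[m])\subseteq\bar{\rho}_3(N)$. Since $\bar{\rho}_3$ is surjective, $\bar{\rho}_3(N)$ equals the normal closure in $\SL_2(\ZZ)$ of $\bar{\rho}_3(R)\cup\{T^m\}$, where $T=\bar{\rho}_3(\sigma_1)=\begin{psmallmatrix}1&1\\0&1\end{psmallmatrix}$; by hypothesis this normal closure contains $\SL_2(\ZZ)[2m]$ as well as $T^m$. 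The theorem thus reduces to the $\SL_2(\ZZ)$-theoretic inclusion
\[
  \bar{\rho}_3(B_3[m])\;\subseteq\;\langle\langle T^m,\SL_2(\ZZ)[2m]\rangle\rangle^{\SL_2(\ZZ)},
\]
which I would establish by using \Cref{burau-as-symplectic} to identify $\bar{\rho}_3(B_3[m])$ explicitly; the expected outcome, consistent with the ``$2m$'' appearing in the hypothesis, is that this image coincides with the right-hand side.

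For $n=4$, the argument should run analogously via a compatible surjection $B_4\to\SL_2(\ZZ)$ coming from A'Campo's description of the image of the integral Burau representation. Since the prescribed generators all live inside $B_3\subseteq B_4$, the task is to verify that $B_4$-conjugation suffices to reach all of $B_4[m]$ via the same reduction to a level-$2m$ congruence statement in $\SL_2(\ZZ)$; a convenient technical device here is the surjection $B_4\to B_3$ sending $\sigma_1,\sigma_3\mapsto\sigma_1$ and $\sigma_2\mapsto\sigma_2$, allowing direct comparison with the $n=3$ computation.

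The main obstacle throughout is the $\SL_2(\ZZ)$-theoretic identification of $\bar{\rho}_3(B_3[m])$: the integral (as opposed to merely rational) relationship between $\rho_3$ and $\bar{\rho}_3$ is what forces the factor of $2$ in the hypothesis, and checking that $\bar{\rho}_3(B_3[m])$ sits inside the normal closure of $T^m$ together with $\SL_2(\ZZ)[2m]$ is the technical crux.
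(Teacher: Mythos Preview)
Your plan for $n=3$ is essentially the paper's argument: reduce modulo $\BI_3=\langle(\sigma_1\sigma_2)^6\rangle$ and work in $\SL_2(\ZZ)$. However, the sentence about ``the integral (as opposed to merely rational) relationship between $\rho_3$ and $\bar\rho_3$'' is a red herring. The basis $(c_1,c_2,w)$ of $\ZZ^3$ is a $\ZZ$-basis, so $\rho_3$ is \emph{integrally} conjugate to $\bar\rho_3\oplus 1$ and hence $\bar\rho_3(B_3[m])=\SL_2(\ZZ)[m]$ on the nose. The factor of $2$ does not come from any base-change subtlety; it comes from the genuinely nontrivial equality
\[
\SL_2(\ZZ)[m]=\bigl\langle\text{(normal closure of }T^m),\ \SL_2(\ZZ)[2m]\bigr\rangle,
\]
which is the $n=3$ instance of \Cref{level-m-from-level-2m-and-mth-powers}. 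You correctly flag this as the crux, but you do not indicate how to prove it; the paper does so via a Chinese-remainder reduction to $m=2^r$ and then an $\spm$-linearization (\Cref{sp2g-equivariant}) reducing further to $r=1$.

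For $n=4$ your plan has a real gap. Saying ``the argument should run analogously via a compatible surjection $B_4\to\SL_2(\ZZ)$'' overlooks that, unlike $\ker\bar\rho_3=\BI_3$, the kernel of the relevant map $\Gamma_4\hookrightarrow\Sp(V_4)_w\to\Sp(w^\perp/w)\cong\SL_2(\ZZ)$ is far from being contained in $\rho_4(\BI_4)$: it is a genuine $2$-step nilpotent group sitting inside $\Sp_4(\ZZ)$. The paper has to show that $\Gamma_4^m$ already swallows the intersection of this kernel with the level-$2m$ subgroup, and for this it needs two additional ingredients you do not mention: \Cref{kernel-gen-by-transvections}, which exhibits that kernel as generated by $m$th powers of transvections $T_u$ with $u\perp w$, and \Cref{final-theorem}, which shows $T_x^{2m}\in\Gamma_4^m$ for all $x\in W_4$. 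Only after the kernel is absorbed does the special homomorphism $B_4\to B_3$ come into play, and then merely to transport the hypothesis on $R$ to the quotient $\Sp(w^\perp/w)$. Your proposal treats the $B_4\to B_3$ map as the whole story, but it handles only the image side; without the transvection result there is no control over the kernel and the argument does not close.
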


One way to construct $R$ satisfying the conditions of \Cref{maintheorem-for-3-and-4} is to find a normal generating set of $\SL_2(\ZZ)[m]$. This can be extracted from a finite presentation of $\SL_2(\ZZ/m\ZZ)$, such as the one explicitly written down for all $m\geq1$ by Hsu \cite[Lemma 3.5]{Hsu96}.

It follows from work of Miller \cite{Mil02}, see also \cite[Chapter VII, Section 13]{New72}, that when $1\leq m\leq 5$ the image of $\bar{\rho}_3(\sigma_1^m)$ normally generates $\SL_2(\ZZ)[m]$ in $\SL_2(\ZZ)$. Thus \Cref{maintheorem-for-3-and-4} implies \Cref{maintheorem} in the special cases $n=3,4$ and $1\leq m\leq 5$.

\subsection*{Related work}

For the cases $n=3$ and $1\leq m\leq5$, \Cref{maintheorem} follows from Miller \cite{Mil02}. \Cref{maintheorem} was also implicitly known in the special cases where $B_n[m]=B_n^m$, however in general this equality does not hold (see \Cref{coxeter-remark}). A notable case where this equality does hold is $B_n[2]=B_n^2$, which follows from Arnol'd's identification of $B_n[2]$ with the pure braid group $PB_n$ \cite{Arn68}, and Moore's presentation of the symmetric group $S_n$ \cite{Moo97}.

For all $n\geq2$, a normal generating set of $B_n[3]$ was obtained by Assion \cite{Ass78a}, which was extended to a normal generating set of $B_n[p]$ for odd primes $p$ by Wajnryb \cite{Waj91}. Stylianakis has also found a normal generating set of $B_n[2p]$, and outlines how to construct normal generating sets for $B_n[2m]$ for square-free $m$ \cite[Corollary 5.4]{Sty18}.

Brendle and Margalit showed for all $n\geq2$ that $B_n[4]$ is generated by all squares of Dehn twists \cite{BM18}; our \Cref{maintheorem} implies for all $n\geq2$ that $B_n[4]$ is generated by just the squares of Dehn twists about all curves surrounding 2, 3, or 5 points. Nakamura has also produced a different normal generating set of $B_n[4]$ \cite[Theorem 1.3]{Nak22}.

The groups $B_n[m]$ are all finitely generated, but the problem of determining explicit finite generating sets of these groups remains open in general. This has been done for the pure braid group $B_n[2]$ by Artin \cite{Art25}, and for $B_3[p]$ where $p$ is an odd prime by Bellingeri, Damiani, Ocampo, and Stylianakis \cite[Theorem 3.9]{BDOS24}. In fact, these papers give finite presentations on the generators they provide.

We would also like to draw the reader's attention to related work of Bloomquist, Patzt, and Scherich, who completed the determination of the images of $B_n$ and $B_n[m]$ under the integral Burau representation \cite{BPS23}.

\begin{remark}\label{infinite-index}
  When $n=3,4$ and $m\geq6$ the subgroup $\langle\BI_n,B_n^m\rangle$ is not equal to $B_n[m]$; in fact it has infinite index in $B_n$. Indeed, for $n=3,4$, the braid Torelli group $\BI_n$ is normally generated in $B_n$ by $(\sigma_1\sigma_2)^6$ by \cite[Theorem C]{BMP15}. Therefore, the special homomorphism $B_4\to B_3$ defined by $\sigma_1,\sigma_3\mapsto\sigma_1$ and $\sigma_2\mapsto\sigma_2$ descends to a surjective homomorphism
  \[
    B_4/\langle\BI_4,B_4^m\rangle \to B_3/\langle \BI_3, B_3^m\rangle.
  \]
  Since $Z(B_3)=\langle(\sigma_1\sigma_2)^3\rangle$, the quotient $B_3/\langle\BI_3,B_3^m\rangle$ surjects onto $B_3/\langle Z(B_3),B_3^m\rangle$. On the generators $x=(\sigma_1\sigma_2)^{-1}$ and $y=\sigma_1\sigma_2\sigma_1$ of $B_3$, the relation $\sigma_1\sigma_2\sigma_1=\sigma_2\sigma_1\sigma_2$ is equivalent to $x^{-3}=y^2$. We also have the identity $\sigma_1=xy$. Hence
  \[
    B_3/\langle Z(B_3), B_3^m \rangle \cong \langle x,y \mid x^3 = y^2 = (xy)^m = 1 \rangle.
  \]
  This group, known as the $(2,3,m)$-von Dyck group, is infinite for $m\geq6$ by a result of Miller \cite{Mil02}, see also \cite[Section 5.3]{CM57}. Therefore, when $n=3,4$ and $m\geq6$ the subgroup $\langle\BI_n,B_n^m\rangle$ has infinite index in $B_n$.
\end{remark}

\begin{remark}\label{coxeter-remark}
  In \Cref{mth-power-contained-in-mth-congruence} we observe that $B_n^m\leq B_n[m]$ for all $m\geq1$ and $n\geq2$. Thus, $B_n^m=B_n[m]$ if and only if $B_n/B_n^m$ and $B_n/B_n[m]$ have the same order. The orders of the quotients $B_n/B_n^m$ and $B_n/B_n[m]$ are all known. The determination of the orders of $B_n/B_n^m$ was completed by Coxeter \cite{Cox59}, and the same task for $B_n/B_n[m]$ was completed by Bloomquist, Patzt and Scherich \cite{BPS23}.

  Coxeter showed that $B_n/B_n^m$ is finite if and only if $1/m+1/n>1/2$ \cite{Cox59}, see also \cite{Ass78b}. By comparing the orders of the quotients $B_n/B_n^m$ and $B_n/B_n[m]$ for the remaining values of $m\geq1$ and $n\geq2$ for which $B_n/B_n^m$ is finite, we find that
  
  \begin{itemize}
  \item $B_n[m]=B_n^m$ if and only if $m=1,2$, or $n=2$, or $m=3$ and $n=3,4$,
  \item $B_3[4]/B_3^4\cong\ZZ/2\ZZ$, $B_3[5]/B_3^5\cong\ZZ/5\ZZ$, and $ B_5[3]/B_5^3\cong\ZZ/3\ZZ$,
  \item $B_n[m]/B_n^m$ is infinite otherwise.
  \end{itemize}
\end{remark}

\subsection*{Outline of the paper}

In \Cref{burau-background,sp2g-background} we give background on the Burau representation and the integral symplectic group $\Sp_{2g}(\ZZ)$. In \Cref{stepping-stone} we prove an intermediate result: $B_n[m]=\langle B_n[2m],B_n^m\rangle$. In \Cref{reduction} we use this to reduce \Cref{maintheorem,maintheorem-for-3-and-4} to \Cref{final-theorem}, which asserts that $\rho_n(B_n^m)$ contains all $2m$th powers of certain elements of $\GL_n(\ZZ)$. Finally, we prove \Cref{final-theorem} in \Cref{final-proof}.

\subsection*{Acknowledgements} We would like to thank Benson Farb, Dan Margalit, and Charalampos Stylianakis for their helpful comments on the paper.

\section{The integral Burau representation as a symplectic representation}\label{burau-background}

There is a sense in which the integral Burau representation $\rho_n\colon B_n\to\GL_n(\ZZ)$ is a symplectic representation of the braid group, which we explain in \Cref{burau-as-symplectic}. To state this \namecref{burau-as-symplectic}, we first establish some notation that we use throughout the paper. We write
\[
  \Gamma_n \coloneqq \rho_n(B_n), \quad \Gamma_n^m \coloneqq \rho_n(B_n^m), \quad \Gamma_n[m] \coloneqq \rho_n(B_n[m]) = \Gamma_n\cap\GL_n(\ZZ)[m].
\]

Note that we have the short exact sequence
\[
  1 \to \BI_n \to B_n[m] \to \Gamma_n[m] \to 1,
\]
and thus $B_n[m]=\langle\BI_n,B_n^m\rangle$ is equivalent to $\Gamma_n[m]=\Gamma_n^m$.

\begin{definition}[\textbf{Transvections}]
  Given a $\ZZ$-module $V$ equipped with an alternating $\ZZ$-bilinear form $\langle-,-\rangle$, for each $x\in V$ we define the \emph{transvection} $T_x\in\Aut(V,\langle-,-\rangle)$ by the formula
\[
  T_x(v) \coloneqq v - \langle v,x\rangle x \quad \forall v\in V.
\]
\end{definition}

Note that
\[
  AT_xA^{-1}=T_{Ax} \quad \text{for all } x\in V \text{ and } A\in\Aut(V,\langle-,-\rangle).
\]

Now let $e_1,\ldots,e_n$ denote the standard basis of $\ZZ^n$, and equip $\ZZ^n$ with the alternating $\ZZ$-bilinear form defined by
\[
  \langle e_i,e_j\rangle = \begin{cases} 1 & i<j \\ 0 & i=j \\ -1 & i>j. \end{cases}
\]
Define $c_i\coloneqq e_i-e_{i+1}$ for $1\leq i<n$. Then we have the following equivalent definition of the integral Burau representation that is easily checked.
\begin{proposition}
  For $n\geq2$, the integral Burau representation $\rho_n$ satisfies
  \[
    \rho_n(\sigma_i) = T_{c_i} \quad 1\leq i<n.
  \]
\end{proposition}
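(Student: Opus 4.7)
The statement is a pointwise identity between two explicit $\ZZ$-linear endomorphisms of $\ZZ^n$, so the plan is to verify that $\rho_n(\sigma_i)(e_j) = T_{c_i}(e_j)$ for every standard basis vector $e_j$ and then invoke linearity. No deeper structural argument is needed.

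First I would specialize the Burau block $\begin{psmallmatrix}1-t & 1 \\ t & 0\end{psmallmatrix}$ at $t=-1$ to obtain $\begin{psmallmatrix}2 & 1 \\ -1 & 0\end{psmallmatrix}$, from which one reads off that $\rho_n(\sigma_i)$ fixes $e_j$ for $j\notin\{i,i+1\}$, sends $e_i$ to $2e_i - e_{i+1}$, and sends $e_{i+1}$ to $e_i$. On the transvection side I would tabulate the pairing $\langle e_j, c_i\rangle = \langle e_j, e_i\rangle - \langle e_j, e_{i+1}\rangle$ by splitting on whether $j<i$, $j=i$, $j=i+1$, or $j>i+1$; using the sign convention in $\langle-,-\rangle$ this gives $0$, $-1$, $-1$, $0$ respectively. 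Substituting into $T_{c_i}(e_j) = e_j - \langle e_j, c_i\rangle c_i$ with $c_i = e_i - e_{i+1}$ then yields $e_j$, $2e_i - e_{i+1}$, $e_i$, and $e_j$ in the four cases, matching the Burau side on the nose.

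There is no real obstacle here; the work is pure definition unpacking. The only subtle point is bookkeeping for the sign convention in the alternating form. The authors' parenthetical remark that the Burau representation is sometimes defined using the transposes of these matrices is a reminder that the convention on $\langle-,-\rangle$ has to be precisely the one for which $\sigma_i$ corresponds to $T_{c_i}$ rather than to its inverse or a row-transvection. Once the signs are fixed as in the paper, the four-case check above completes the verification.
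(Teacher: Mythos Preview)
Your verification is correct and is exactly the kind of direct check the paper has in mind: the authors give no proof, merely stating that the identity ``is easily checked'', so your column-by-column comparison of $\rho_n(\sigma_i)$ with $T_{c_i}$ is precisely what is intended.
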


This provides a quick way to verify the following \namecref{mth-power-contained-in-mth-congruence}.

\begin{proposition}\label{mth-power-contained-in-mth-congruence}
  If $m\geq1$ and $n\geq2$ then $B_n^m\leq B_n[m]$.
\end{proposition}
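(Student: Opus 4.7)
The plan is to exploit the fact that $B_n[m]$ is a normal subgroup of $B_n$ (being the kernel of a homomorphism), so it suffices to check that the single element $\sigma_1^m$ lies in $B_n[m]$; every other generator of $B_n^m$ is a conjugate of $\sigma_1^m$ and will be carried along.

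By the preceding proposition, $\rho_n(\sigma_1) = T_{c_1}$, so I need to verify that $T_{c_1}^m \in \GL_n(\ZZ)[m]$, i.e.\ that $T_{c_1}^m$ is congruent to the identity mod $m$. A quick induction on $k$ using the defining formula $T_x(v) = v - \langle v,x\rangle x$ shows that
\[
  T_x^k(v) = v - k\langle v,x\rangle x \quad \text{for all } k\geq 0,\ v\in V,\ x\in V.
\]
Specializing to $k=m$ and $x=c_1$, the right-hand side is congruent to $v$ modulo $m$ for every $v\in\ZZ^n$, so $T_{c_1}^m$ reduces to the identity in $\GL_n(\ZZ/m\ZZ)$.

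Hence $\sigma_1^m \in B_n[m]$, and since $B_n[m]$ is normal in $B_n$, its normal closure $B_n^m = \langle\sigma_1^m\rangle^{B_n}$ is contained in $B_n[m]$. The only real content is the formula for $T_x^k$, which is a one-line induction, so there is no serious obstacle here; this proposition is essentially a sanity check that the definitions are compatible.
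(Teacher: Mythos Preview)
Your proof is correct and follows the same approach as the paper: both rely on the observation that a transvection $T_x$ satisfies $T_x^m \equiv I \pmod m$, which is exactly what your formula $T_x^k(v) = v - k\langle v,x\rangle x$ makes explicit. The paper's proof is a one-line version of yours.
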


\begin{proof}
  If $T$ is a transvection then $T^m$ is the identity mod $m$, hence $B_n^m\leq B_n[m]$.
\end{proof}

We also define the following sublattices of $\ZZ^n$.

\begin{definition}[\textbf{The sublattices $W_n$ and $V_n$}]
  Let $n\geq2$, and let $W_n$ be the sublattice of $\ZZ^n$ given by
  \[
    W_n \coloneqq \ZZ c_1 + \cdots + \ZZ c_{n-1} = \{\lambda_1e_1 + \cdots + \lambda_ne_n \in \ZZ^n : \lambda_1 + \cdots + \lambda_n = 0 \}.
  \]
  Let $V_n$ be the sublattice of $\ZZ^n$ given by
  \[
    V_n \coloneqq \begin{cases} \ZZ^n & n \text{ even} \\ W_n & n \text{ odd.} \end{cases}
  \]
  Note that $\mathrm{rank}(V_n)=2\lfloor n/2\rfloor$ is even for all $n\geq2$.
\end{definition}

The following \namecref{burau-as-symplectic} is well-known, but is typically expressed with different notation.

\needspace{3\baselineskip} 
\begin{proposition}\label{burau-as-symplectic}
  Let $n\geq2$.
  \begin{enumerate}[label=(\alph*)]
  \item The sublattices $W_n$ and $V_n$ are $\Gamma_n$-invariant.
  \item The bilinear form $\langle-,-\rangle$ on $\ZZ^n$ restricts to a symplectic form on $V_n$.
  \item The vector
    \[
      w \coloneqq e_1 - e_2 + \cdots + (-1)^{n-1}e_n \in \ZZ^n
    \]
    is fixed by $\Gamma_n$.
  \item If $n$ is odd, then $\ZZ^n$ decomposes as a sum $V_n+\ZZ w$.
  \item If $n$ is even, then
    \[
      w = e_1 - e_2 + \cdots - e_n = c_1 + c_3 + \cdots + c_{n-1} \in W_n,
    \]
    and the orthogonal complement $w^\perp$ of $w$ in $\ZZ^n$ is $W_n$.
  \item The natural map
    \[
      \Gamma_n \to \Sp(V_n) \cong \Sp_{2\lfloor n/2\rfloor}(\ZZ)
    \]
    is injective, and when $\Gamma_n$ is viewed as a subgroup of $\Sp(V_n)$ we have
    \[
      \Gamma_n[m] = \Gamma_n \cap \Sp(V_n)[m].
    \]
  \end{enumerate}
\end{proposition}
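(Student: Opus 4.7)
The proof is essentially bookkeeping built on the single formula $\rho_n(\sigma_i) = T_{c_i}$ together with the identity $T_x(v) = v - \langle v,x\rangle x$. I would dispatch the parts in an order that respects their logical dependencies: (a), then (c), then (d) and (e), then (b), and finally (f).

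Parts (a) and (c) follow from direct calculation. For (a), $T_{c_i}$ modifies any vector by a $\ZZ$-multiple of $c_i \in W_n$, so $T_{c_i}$ preserves $W_n$ and trivially preserves $\ZZ^n$; since the $\sigma_i$ generate $B_n$, both $W_n$ and $V_n$ are $\Gamma_n$-invariant. For (c), expand $\langle w, c_i\rangle$ by bilinearity and observe that the contributions from indices $k \notin \{i, i+1\}$ cancel in pairs, while the remaining terms cancel against the sign change at position $i$ in $w$. Hence $T_{c_i}(w) = w$ for all $i$, so $\Gamma_n$ fixes $w$.

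For (d) and (e), the key observation is that $w$ has coordinate sum $1$ when $n$ is odd and $0$ when $n$ is even. In the odd case, any $v \in \ZZ^n$ with coordinate sum $s$ decomposes as $(v - sw) + sw$, where $v - sw \in W_n = V_n$. In the even case, $w \in W_n$, and grouping consecutive pairs of terms expresses $w$ as $c_1 + c_3 + \cdots + c_{n-1}$. Part (c) gives $W_n \subseteq w^\perp$; to complete (e) I would compute $\langle e_j, w\rangle = -1$ for every $j$, which yields $\langle v, w\rangle = -\sum_j \lambda_j$ and hence $w^\perp = W_n$.

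For (b), the form is alternating by restriction, so the content is unimodularity, which reduces to a single determinant computation. When $n$ is even, compute the Gram matrix of $\{e_1,\ldots,e_n\}$; when $n$ is odd, the Gram matrix of $\{c_1,\ldots,c_{n-1}\}$ is tridiagonal with $\pm 1$ on the adjacent diagonals, whose determinant is $1$ by cofactor expansion. Finally, for (f): when $n$ is even, $V_n = \ZZ^n$ and both assertions are immediate from the fact that transvections preserve $\langle-,-\rangle$. When $n$ is odd, parts (c) and (d) combine to give a $\Gamma_n$-invariant \emph{direct} sum decomposition $\ZZ^n = V_n \oplus \ZZ w$ with integral basis $\{c_1,\ldots,c_{n-1}, w\}$, in which every $A \in \Gamma_n$ is block-diagonal with blocks $A|_{V_n}$ and $1$. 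Injectivity of $\Gamma_n \to \Sp(V_n)$ is then immediate, and since reduction mod $m$ is invariant under change of $\ZZ$-basis, we conclude $\Gamma_n[m] = \Gamma_n \cap \Sp(V_n)[m]$. The only step with genuine content is the unimodularity in (b); everything else is linear-algebraic bookkeeping.
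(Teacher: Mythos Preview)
Your proposal is correct and matches the paper's own proof, which is extremely terse: it declares (a), (c), (d), (e) ``easy to verify,'' handles (b) by citing references for a symplectic basis, and deduces (f) from (c) and (d) exactly as you do. The only cosmetic difference is that for (b) you verify unimodularity via the Gram determinant rather than by exhibiting a symplectic basis; these are equivalent ways to check the same thing.
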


\begin{proof}
  Parts (a), (c), (d), and (e) are easy to verify. To prove part (b) it suffices to find a symplectic basis of $V_n$, see e.g., \cite[Section 2.1]{BM18} or the proof of \cite[Lemma 2.1]{BPS23}. Part (f) is immediate when $n$ is even, and when $n$ is odd it follows from parts (c) and (d).
\end{proof}

With respect to the $\ZZ$-basis $(c_1,c_2)$ of $V_3$, the images $\rho_3(\sigma_1)$ and $\rho_3(\sigma_2)$ act by the matrices $\begin{psmallmatrix}1&1\\0&1\end{psmallmatrix}$ and $\begin{psmallmatrix}1&0\\-1&1\end{psmallmatrix}$, which generate $\SL_2(\ZZ)\cong\Sp(V_3)$. Thus, by \Cref{burau-as-symplectic} we may identify $\rho_3$ with the surjection $\bar{\rho}_3\colon B_3\to\SL_2(\ZZ)$ mentioned in the statement of \Cref{maintheorem-for-3-and-4}.

\section{Background on the integral symplectic group}\label{sp2g-background}

The purpose of this section is to establish some useful facts about the integral symplectic group and its stabilizer subgroups. If a group $G$ acts on a set $X$ and $x\in X$, denote the stabilizer of $x$ in $G$ by $G_x$.

Let $g\geq2$ and equip $\ZZ^{2g}$ with a symplectic form $\langle-,-\rangle$. If $w\in\ZZ^{2g}$ is a primitive vector, then $\langle-,-\rangle$ descends to a symplectic form on $w^\perp/w\cong\ZZ^{2g-2}$.

If $T\in\Sp_{2g}(\ZZ)_w$, then $T$ preserves the sublattice $w^\perp$ of $\ZZ^{2g}$, and thus descends to an element of $\Sp(w^\perp/w)\cong\Sp_{2g-2}(\ZZ)$. This gives rise to a natural map $\Sp_{2g}(\ZZ)_w\to\Sp(w^\perp/w)$, which restricts to a homomorphism $\Sp_{2g}(\ZZ)[m]_w\to\Sp(w^\perp/w)[m]$ between the level $m$ congruence subgroups for all $m\geq1$.

\begin{proposition}\label{kernel-gen-by-transvections}
  If $g\geq2$, $m\geq1$ and $w\in\ZZ^{2g}$ is primitive then the group
  \[
    G \coloneqq \langle T_u^m : u\in\ZZ^{2g}, u\perp w\rangle
  \]
  contains the kernel of the natural map
  \[
    K \coloneqq\ker(\Sp_{2g}(\ZZ)[m]_w \to \Sp(w^\perp/w)[m]).
  \]
\end{proposition}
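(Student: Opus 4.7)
The plan is to parameterize $K$ explicitly by a lattice and then to realize every point of this lattice by a product of the generators of $G$.

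Since $w$ is primitive, I fix a vector $v_0 \in \ZZ^{2g}$ with $\langle v_0, w \rangle = 1$; this provides a direct-sum decomposition $\ZZ^{2g} = w^\perp \oplus \ZZ v_0$. For $A \in K$ set $u_0(A) \coloneqq A v_0 - v_0$. Using $A w = w$ one checks $u_0(A) \in w^\perp$, and combining with $A \equiv I \pmod m$ one deduces $u_0(A) \in m w^\perp$. Imposing the symplectic condition $\langle A u', A v_0 \rangle = \langle u', v_0 \rangle$ for $u' \in w^\perp$ forces $A$ to act on $w^\perp$ by $u' \mapsto u' + \langle u', u_0(A) \rangle w$. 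Thus $A$ is completely determined by $u_0(A)$, and conversely every $u_0 \in m w^\perp$ defines some $A \in K$, so $A \mapsto u_0(A)$ is a bijection $K \leftrightarrow m w^\perp$.

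Next I exhibit elements of $G \cap K$ realizing every parameter. For any $u \in w^\perp$ we also have $u + w \perp w$, so $T_u^{-m} T_{u + w}^m \in G$, and a direct computation gives that this element lies in $K$ with parameter
\[
  u_0\bigl(T_u^{-m} T_{u + w}^m\bigr) = -m u - m\bigl(\langle v_0, u \rangle + 1\bigr) w.
\]
Moreover $T_w^m \in G \cap K$ has parameter $-m w$, and is central in $K$ because $w$ lies in the radical of the symplectic form restricted to $w^\perp$.

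Given an arbitrary $A \in K$ with parameter $m u_1$ for some $u_1 \in w^\perp$, set $j \coloneqq \langle v_0, u_1\rangle - 1$ and consider
\[
  B \coloneqq T_{-u_1}^{-m} T_{-u_1 + w}^m \cdot (T_w^m)^{j} \in G.
\]
A short calculation shows that the product law in $K$ takes the form $u_0(A_1 A_2) = u_0(A_1) + u_0(A_2) + \langle u_0(A_2), u_0(A_1)\rangle w$, and the cocycle term $\langle u_0(A_2), u_0(A_1)\rangle w$ vanishes whenever $A_2$ is a power of $T_w^m$ (since then $u_0(A_2) \in \ZZ w$ and $u_0(A_1) \in w^\perp$). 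Hence $B$ has parameter $m u_1 + m j w - m j w = m u_1 = u_0(A)$, giving $B = A \in G$ by the bijectivity of the parameterization.

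The trickiest step is the initial parameterization: one must verify carefully that the hypotheses defining $K$ really force $u_0(A) \in m w^\perp$ (not merely $u_0(A) \in w^\perp$) and that every such $u_0$ conversely yields a well-defined element of $K$. Everything else reduces to bookkeeping of the Heisenberg-type multiplication in $K$, which is made tractable by the fact that $T_w^m$ is central so that composing with $(T_w^m)^j$ simply adds parameters.
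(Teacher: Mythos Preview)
Your proof is correct and follows essentially the same approach as the paper: both fix $v_0$ with $\langle v_0,w\rangle=1$, establish the bijection $K\to mw^\perp$ via $A\mapsto Av_0-v_0$, and then realize every parameter using a product of the form $T_u^{\pm m}T_{u+w}^{\mp m}$ together with a suitable power of $T_w^m$. The only cosmetic difference is that the paper works modulo $mw$ and then adjusts by powers of $T_w^m$, whereas you track the Heisenberg-type product law explicitly to hit the exact parameter in one step.
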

\begin{proof}
  Fix $v\in\ZZ^{2g}$ with $\langle v,w\rangle=1$. We first prove that $T\mapsto Tv-v$ defines a set-theoretic bijection $K\to mw^\perp$. Let $T\in K$. Since $Tw=w$, we have $\langle v,w\rangle=\langle Tv,Tw\rangle=\langle Tv,w\rangle$, and hence $\langle Tv-v,w\rangle=0$. Also, $Tv-v$ is a multiple of $m$ since $T$ is the identity modulo $m$. Hence $Tv-v\in mw^\perp$.

  To prove the map $K\to mw^\perp$ is surjective, note that for each $x\in w^\perp$ we may define $S_x\in K$ with $S_xv-v=mx$ as follows:
  \[
    S_xv \coloneqq v + mx, \qquad S_xu \coloneqq u + \langle u,x\rangle mw \quad \forall u\in w^\perp.
  \]
  For injectivity, suppose that $T\in K$ with $Tv-v=mx$, where $x\in w^\perp$. Let $T'\coloneqq S_x^{-1}T\in K$. Suppose that $u\in w^\perp$. We have $T'v=v$, hence $T'u-u\in v^\perp$. Furthermore, $T'u-u\in\ZZ w$, since $T'$ acts trivially on $w^\perp/w$. Therefore $T'u-u=0$, and so $T'$ is the identity. Thus $T=S_x$, which proves injectivity.

  Next, note that $T_w^m\in G\cap K$, and also if $T\in K$ and $k\in\ZZ$, then
  \[
    T_w^{k}Tv - v = T_w^k(Tv-v) + (T_w^kv-v) = (Tv-v) - kw.
  \]
  Hence, it suffices to prove that for all $x\in w^\perp$, there exists $T\in G\cap K$ such that $Tv-v\equiv mx\pmod{mw}$. Let $x\in w^\perp$ and define $T\coloneqq T_x^mT_{x+w}^{-m}\in G\cap K$. We have
    \begin{align*}
    Tv - v &= T_x^m(v + m\langle v,x+w\rangle(x+w)) - v\\
    &= T_x^m(v + m(\langle v,x\rangle + 1)(x+w)) - v \\
    &= m(\langle v,x\rangle + 1)(x+w) - m\langle v,x\rangle x \\
    &\equiv mx \pmod{mw}.\qedhere
    \end{align*}
\end{proof}

\begin{remark}
  A small addition to the above proof also shows that the kernel $K$ in \Cref{kernel-gen-by-transvections} is a 2-step nilpotent group, with center $Z(K)=\langle T_w^m\rangle$ and the mapping $T\mapsto\frac{1}{m}(Tv-v)$ inducing an isomorphism $K/Z(K)\cong w^\perp/w$.
\end{remark}

\begin{proposition}\label{gen-by-transvections}\
  \begin{enumerate}[label=(\alph*)]
  \item If $g\geq2$ and $m\geq1$ then
    \[
      \Sp_{2g}(\ZZ)[w] = \langle T_u^m : u\in\ZZ^{2g} \rangle.
    \]
  \item If $g\geq3$, $m\geq1$ and $w\in\ZZ^{2g}$ is primitive then
    \[
      \Sp_{2g}(\ZZ)[m]_w = \langle T_u^m : u\in\ZZ^{2g},\ u\perp w\rangle.
    \]
  \end{enumerate}
\end{proposition}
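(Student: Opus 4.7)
The plan is to dispatch part (a) immediately by invoking Mennicke's theorem (cited in the introduction), which asserts exactly that $\Sp_{2g}(\ZZ)[m]$ is generated by $m$th powers of transvections for $g\geq 2$. Everything nontrivial is in part (b), and the strategy there is to set up a short exact sequence, handle the kernel via \Cref{kernel-gen-by-transvections}, and handle the quotient by an induction-like appeal to part (a) in one lower genus.

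Fix $g\geq 3$, $m\geq 1$, and a primitive $w\in\ZZ^{2g}$. Write $H\coloneqq\Sp_{2g}(\ZZ)[m]_w$ and $G\coloneqq\langle T_u^m : u\in\ZZ^{2g},\ u\perp w\rangle$. The containment $G\subseteq H$ is immediate: each $T_u$ with $u\perp w$ fixes $w$ (since $T_u(w)=w-\langle w,u\rangle u=w$), and $T_u^m(v)=v-m\langle v,u\rangle u$ is the identity mod $m$. For the reverse containment, consider the natural reduction homomorphism
\[
  \pi\colon H \to \Sp(w^\perp/w)[m]
\]
introduced just before \Cref{kernel-gen-by-transvections}, and recall from that \namecref{kernel-gen-by-transvections} that $\ker\pi\subseteq G$.

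Now I identify $\pi(G)$ with the entire target. Under the symplectic isomorphism $\Sp(w^\perp/w)\cong\Sp_{2g-2}(\ZZ)$, the hypothesis $g\geq 3$ ensures $g-1\geq 2$, so part (a) applies and says $\Sp(w^\perp/w)[m]$ is generated by $m$th powers $T_{\bar u}^m$ of transvections, where $\bar u$ ranges over $w^\perp/w$. For each such $\bar u$, pick any lift $u\in w^\perp$; then $T_u^m\in G$, and $\pi(T_u^m)=T_{\bar u}^m$. Hence $\pi(G)$ contains a generating set of $\Sp(w^\perp/w)[m]$, so $\pi(G)=\Sp(w^\perp/w)[m]\supseteq\pi(H)$.

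Combining the two observations finishes the argument: given $h\in H$, surjectivity of $\pi|_G$ onto $\pi(H)$ produces $g\in G$ with $\pi(g)=\pi(h)$, and then $hg^{-1}\in\ker\pi\subseteq G$, so $h\in G$. The only genuine input beyond elementary bookkeeping is the kernel computation of \Cref{kernel-gen-by-transvections} together with Mennicke's theorem; the expected obstacle is simply keeping track of why the hypothesis drops from $g\geq 2$ in (a) to $g\geq 3$ in (b) — this is forced by the application of (a) to the quotient symplectic lattice of rank $2(g-1)$, which must still satisfy the hypothesis of (a).
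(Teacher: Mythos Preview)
Your proof is correct and follows essentially the same approach as the paper: cite Mennicke for part (a), and for part (b) combine \Cref{kernel-gen-by-transvections} for the kernel with part (a) applied to $\Sp(w^\perp/w)[m]$ for the image. The paper's proof is simply a more compressed version of what you wrote.
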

\begin{proof}
  Part (a) is due to Mennicke \cite[Section 10]{Men65}. For part (b), note that part (a) implies that $\Sp(w^\perp/w)[m]$ is generated by $m$th powers of transvections. Hence \Cref{kernel-gen-by-transvections} implies the result.
  \end{proof}

The following \namecref{sp2g-equivariant} is due to Newman and Smart \cite{NS64}.

\begin{proposition}[{\cite[Theorem 7]{NS64}}]\label{sp2g-equivariant}
  Let $g\geq1$ and $m,\ell\geq2$ with $m\mid\ell$. There are isomorphisms of abelian groups
  \[
    \Sp_{2g}(\ZZ)[\ell]/\Sp_{2g}(\ZZ)[m\ell]\cong\spm_{2g}(\ZZ/m\ZZ) \cong (\ZZ/m\ZZ)^{2g^2+g}.
  \]
\end{proposition}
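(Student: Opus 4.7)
The plan is to construct an explicit isomorphism
\[
  \phi\colon \Sp_{2g}(\ZZ)[\ell]/\Sp_{2g}(\ZZ)[m\ell] \xrightarrow{\cong} \spm_{2g}(\ZZ/m\ZZ)
\]
induced by the assignment $A = I + \ell X \mapsto X \bmod m$, and then separately to compute $|\spm_{2g}(\ZZ/m\ZZ)|$ via the $\ZZ$-rank of $\spm_{2g}(\ZZ)$. Fix any choice of unimodular symplectic form on $\ZZ^{2g}$ with matrix $J$; since all such forms are $\GL_{2g}(\ZZ)$-equivalent, we lose no generality in assuming $J$ is the standard block form, for which $J^{-1}=-J$.

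First I would verify that the assignment $A \mapsto (A-I)/\ell \bmod m$ defines a group homomorphism $\phi\colon \Sp_{2g}(\ZZ)[\ell] \to \spm_{2g}(\ZZ/m\ZZ)$ with kernel $\Sp_{2g}(\ZZ)[m\ell]$. Writing $A = I + \ell X$ and expanding $A^T J A = J$ gives
\[
  \ell(X^T J + JX) + \ell^2 X^T J X = 0.
\]
Reducing modulo $m$ (using $m \mid \ell$) yields $X^T J + JX \equiv 0 \pmod m$, so $\phi(A) \in \spm_{2g}(\ZZ/m\ZZ)$. For a product $AB = I + \ell(X+Y+\ell XY)$, the cross term $\ell XY$ vanishes mod $m$, so $\phi$ is a homomorphism, and the kernel description is immediate.

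The main step is surjectivity, which I would establish directly using transvections, bypassing Mennicke's generation theorem (\Cref{gen-by-transvections}). Since $u^T J u = 0$ for any $u$, we have $(uu^T J)^2 = u(u^T J u) u^T J = 0$, whence $T_u^k = (I + uu^T J)^k = I + k(uu^T J)$ for all $k \geq 0$. In particular $T_u^\ell \in \Sp_{2g}(\ZZ)[\ell]$ and $\phi(T_u^\ell) = uu^T J \bmod m$, so the image of $\phi$ contains the additive span of $\{uu^T J : u \in \ZZ^{2g}\}$. The $\ZZ$-module map $X \mapsto JX$ is an isomorphism from $\spm_{2g}(\ZZ)$ onto the symmetric $2g \times 2g$ integer matrices (since $J$ is unimodular), sending $uu^T J$ to $-(Ju)(Ju)^T$. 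Because $u \mapsto Ju$ is a $\ZZ$-automorphism of $\ZZ^{2g}$, surjectivity reduces to the standard fact that $\{vv^T : v \in \ZZ^{2g}\}$ spans the symmetric integer matrices, which follows from $e_i e_i^T = E_{ii}$ and $(e_i + e_j)(e_i + e_j)^T - E_{ii} - E_{jj} = E_{ij} + E_{ji}$.

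Finally, the same $\ZZ$-module isomorphism identifies $\spm_{2g}(\ZZ)$ with a free abelian group of rank $\binom{2g+1}{2} = 2g^2 + g$, giving $\spm_{2g}(\ZZ/m\ZZ) \cong (\ZZ/m\ZZ)^{2g^2+g}$. I expect the surjectivity step to be the main obstacle, specifically the generation statement for the set $\{uu^T J : u \in \ZZ^{2g}\}$; all other steps follow mechanically from the definition of $\phi$.
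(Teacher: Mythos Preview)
Your proposal is correct and uses exactly the same construction the paper outlines: the map $A=I+\ell X\mapsto X\bmod m$. The paper does not actually prove the \namecref{sp2g-equivariant}; it cites Newman--Smart and merely sketches the construction, asserting without argument that the map is a surjective homomorphism with the stated kernel. Your write-up supplies these omitted details. In particular, your surjectivity argument---computing $\phi(T_u^\ell)=uu^TJ\bmod m$ and then showing that $\{uu^TJ\}$ spans $\spm_{2g}(\ZZ)$ via the identification $X\mapsto JX$ with symmetric matrices---is self-contained and avoids appealing to Mennicke. This dovetails nicely with how the paper later uses the \namecref{sp2g-equivariant} in the proof of \Cref{m-power-of-2}, where precisely the fact that $T^\ell\mapsto T-I\bmod m$ is what is needed.
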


While we do not include Newman and Smart's complete proof of the \namecref{sp2g-equivariant}, we do outline the construction of the first isomorphism. Each $A\in\Sp_{2g}(\ZZ)[\ell]$ has the form $A=I+\ell B$. It can be shown that $B\bmod m\in\spm_{2g}(\ZZ/m\ZZ)$. Furthermore, the mapping $A=I+\ell B\mapsto B\bmod m$ defines a surjective homomorphism $\Sp_{2g}(\ZZ)[\ell]\to\spm_{2g}(\ZZ/m\ZZ)$ whose kernel is $\Sp_{2g}(\ZZ)[m\ell]$.

In particular, if $T\in\Sp_{2g}(\ZZ)$ is a transvection, then $T^\ell\in\Sp_{2g}(\ZZ)[\ell]$ is carried to $T-I\bmod m\in\spm_{2g}(\ZZ/m\ZZ)$ by this mapping.

\section{Level $m$ from level $2m$ and $m$th powers}\label{stepping-stone}

As a stepping stone towards our ultimate goal of proving that $\Gamma_n[m]=\Gamma_n^m$ for the values of $m$ and $n$ specified in \Cref{maintheorem}, in this section we prove the following equality.

\begin{theorem}\label{level-m-from-level-2m-and-mth-powers}
  If $m\geq1$ and $n\geq2$ then
  \[
    \langle\Gamma_n^m, \Gamma_n[2m]\rangle = \Gamma_n[m].
  \]
\end{theorem}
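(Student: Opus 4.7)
The easy containment $\langle\Gamma_n^m,\Gamma_n[2m]\rangle\subseteq\Gamma_n[m]$ follows from \Cref{mth-power-contained-in-mth-congruence}. For the reverse, it is equivalent to show the natural map $\Gamma_n^m\to\Gamma_n[m]/\Gamma_n[2m]$ is surjective. The plan is to use \Cref{burau-as-symplectic}(f) to embed $\Gamma_n\hookrightarrow\Sp(V_n)$, so that this quotient injects into $\Sp(V_n)[m]/\Sp(V_n)[2m]$, and then analyse the latter according to the parity of $m$. The case $n=2$ is trivial since $\Gamma_2$ is infinite cyclic, so I assume $n\geq 3$.

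When $m$ is odd, $\gcd(m,2)=1$ and the Chinese Remainder Theorem identifies $\Sp(V_n)[m]/\Sp(V_n)[2m]$ with a subgroup of $\Sp(V_n\otimes\FF_2)$ via mod-$2$ reduction, under which $T_u^m$ maps to $T_{\bar u}$. Since $\sigma_1$ is conjugate to each $\sigma_i$ in $B_n$, the $\Gamma_n$-conjugates $T_{c_i}^m$ of $T_{c_1}^m$ all lie in $\Gamma_n^m$, and their mod-$2$ images $T_{\bar c_i}$ together generate the mod-$2$ image $\bar\Gamma_n$ of $\Gamma_n$. Since the image of $\Gamma_n[m]\subseteq\Gamma_n$ lies inside $\bar\Gamma_n$, the odd case follows.

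When $m$ is even, \Cref{sp2g-equivariant} identifies $\Sp(V_n)[m]/\Sp(V_n)[2m]$ with the abelian group $\spm(V_n\otimes\FF_2)$, sending $T_u^m$ to $T_{\bar u}-I$. Arnol'd's theorem $B_n[2]=PB_n$ shows that $\bar\Gamma_n$ is the symmetric group $S_n$ acting by permutation of the $\bar e_i$, so the $\bar\Gamma_n$-orbit of $\bar c_1$ is $\{\bar e_i+\bar e_j:1\leq i<j\leq n\}$ and the image $M$ of $\Gamma_n^m$ in $\spm(V_n\otimes\FF_2)$ is the $\FF_2$-span of the $\binom{n}{2}$ operators $T_{\bar e_i+\bar e_j}-I$. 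A direct evaluation at each $\bar e_k$ establishes the key identity
\[
  T_v-I\;=\;\sum_{\substack{i<j\\ i,j\in S}}\bigl(T_{\bar e_i+\bar e_j}-I\bigr)\qquad\text{for }v=\sum_{i\in S}\bar e_i\text{ with }|S|\text{ even,}
\]
the linear independence of the $\binom{n}{2}$ operators $T_{\bar e_i+\bar e_j}-I$, and the fact that each of them annihilates $\bar w:=\bar e_1+\cdots+\bar e_n$. A dimension count then identifies $M$ with $\{X\in\spm(V_n\otimes\FF_2):X\bar w=0\}$ when $n$ is even (both spaces having dimension $\binom{n}{2}$), and with $\spm(V_n\otimes\FF_2)$ itself when $n$ is odd (again of dimension $\binom{n}{2}$).

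To conclude, the image of $\Gamma_n[m]$ in $\spm(V_n\otimes\FF_2)$ is also contained in $M$: for $n$ even, $Aw=w$ (\Cref{burau-as-symplectic}(c)) forces the image to annihilate $\bar w$, placing it in $M$; for $n$ odd, $M$ is already the whole ambient space. The main obstacle is the even $m$ case, where the combinatorial identity and the linear-independence/dimension count together pin down both images to the same explicit subspace $M$; once this is in hand both containments match and the theorem follows.
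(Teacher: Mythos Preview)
Your argument is correct and takes a genuinely different route from the paper's. The paper first establishes the special case $m=2^r$ (\Cref{m-power-of-2}): using \Cref{sp2g-equivariant} it observes that for $r\ge1$ the image of $T^{2^r}$ in $\Sp(V_n)[2^r]/\Sp(V_n)[2^{r+1}]\cong\spm(V_n/2V_n)$ is $T-I\bmod 2$, which is \emph{independent of $r$}; this reduces the lemma to $r=1$, i.e.\ to the known equality $B_n^2=B_n[2]$ (Arnol'd plus Moore). The general $m=2^rs$ is then handled by CRT, noting $T^m\equiv T^{2^r}\pmod{2^{r+1}}$. By contrast, you split only on the parity of $m$ and, in the even case, replace the reduction-to-$m=2$ step with an explicit determination of the image $M$ of $\Gamma_n^m$ inside $\spm(V_n\otimes\FF_2)$ via a linear-algebra computation. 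Your approach is more self-contained (it does not invoke $B_n^2=B_n[2]$ as a black box) and yields the concrete description $M=\{X:X\bar w=0\}$ (all of $\spm$ when $n$ is odd), at the cost of the combinatorial identity and dimension count; the paper's approach is shorter because it outsources the base case. One small point worth making explicit in your write-up: for $n$ odd the vectors $\bar e_k$ do not lie in $V_n/2V_n=W_n/2W_n$, so your ``evaluation at each $\bar e_k$'' is really carried out on the ambient $\FF_2^n$; the passage back to linear independence over $W_n/2W_n$ uses exactly your observation that each $T_{\bar e_i+\bar e_j}-I$ annihilates $\bar w$, together with the splitting $\FF_2^n=(W_n/2W_n)\oplus\FF_2\bar w$.
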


With this theorem in hand, the equality $\Gamma_n[m]=\Gamma_n^m$ is clearly equivalent to the inclusion $\Gamma_n[2m]\leq\Gamma_n^m$, which we consider in later sections.

\Cref{level-m-from-level-2m-and-mth-powers} also immediately implies \Cref{maintheorem-for-3-and-4} in the case of $n=3$, since the isomorphism $\Gamma_3\cong\SL_2(\ZZ)$ given by \Cref{burau-as-symplectic} takes $\Gamma_3[2m]$ to $\SL_2(\ZZ)[2m]$. To handle the $n=4$ case we will need results from subsequent sections.

\subsection{Proof when $m$ is a power of 2}

We being by proving the following special case of \Cref{level-m-from-level-2m-and-mth-powers}.

\begin{lemma}\label{m-power-of-2}
  If $r\geq0$ and $n\geq2$, then
  \[
    \langle\Gamma_n^{2^r}, \Gamma_n[2^{r+1}] \rangle = \Gamma_n[2^r].
  \]
\end{lemma}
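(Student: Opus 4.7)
The plan is to reduce the lemma to an equality of subspaces of $\spm(V_n)(\ZZ/2\ZZ)$ via the Newman--Smart identification from \Cref{sp2g-equivariant}. The inclusion $\langle\Gamma_n^{2^r},\Gamma_n[2^{r+1}]\rangle\leq\Gamma_n[2^r]$ follows from \Cref{mth-power-contained-in-mth-congruence}, and the case $r=0$ is trivial, so I assume $r\geq 1$. By \Cref{sp2g-equivariant} (applied with $\ell=2^r$ and $m=2$ to $\Sp(V_n)$), the map $\pi\colon\Sp(V_n)[2^r]\to\spm(V_n)(\ZZ/2\ZZ)$ sending $I+2^rB$ to $B\bmod 2$ is a surjective homomorphism with kernel $\Sp(V_n)[2^{r+1}]$. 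Its restriction to $\Gamma_n[2^r]$ has kernel $\Gamma_n[2^{r+1}]$, which is contained in the claimed subgroup, so the reverse inclusion in the lemma is equivalent to
\[
  \pi(\Gamma_n[2^r])=\pi(\Gamma_n^{2^r})\qquad\text{inside }\spm(V_n)(\ZZ/2\ZZ).
\]

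The nilpotency $(T_v-I)^2=0$ for any transvection $T_v$ gives the exact identity $T_v^{2^r}=I+2^r(T_v-I)$, so $\pi(T_v^{2^r})=T_v-I\bmod 2$. Since $\Gamma_n^{2^r}$ is the normal closure in $\Gamma_n$ of $T_{c_1}^{2^r}$ and conjugation satisfies $AT_vA^{-1}=T_{Av}$, we obtain
\[
  \pi(\Gamma_n^{2^r})=M\coloneqq\operatorname{span}_{\FF_2}\{T_v-I\bmod 2:v\in\Gamma_n\cdot c_1\}\subseteq\spm(V_n)(\ZZ/2\ZZ).
\]

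The heart of the proof is to show $\pi(\Gamma_n[2^r])\subseteq M$. Given $A\in\Gamma_n[2^r]$, I write $A$ as a word $A=T_{c_{i_1}}^{\epsilon_1}\cdots T_{c_{i_k}}^{\epsilon_k}$ in the standard transvection generators of $\Gamma_n$. Using $T_{c_i}^{\pm1}=I\pm(T_{c_i}-I)$ and expanding, $A-I$ becomes a sum over nonempty subsets $S\subseteq\{1,\ldots,k\}$ of ordered products of the $\epsilon_j(T_{c_{i_j}}-I)$ for $j\in S$. The hypothesis $A\equiv I\pmod{2^r}$ forces this total to be divisible by $2^r$, and $\pi(A)$ is obtained by dividing by $2^r$ and reducing modulo $2$. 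Using the conjugation identity $T_u(T_v-I)T_u^{-1}=T_{T_uv}-I$ iteratively, any product of two or more transvection nilpotents can be reorganized into an $\FF_2$-combination of single $T_w-I$'s with $w\in\Gamma_n\cdot c_1$, placing $\pi(A)$ inside $M$. The main obstacle is the careful $2$-adic accounting throughout this reorganization: one must ensure that, after all manipulations, the linear and higher-order contributions to $\pi(A)$ both land in $M$. A natural strategy is to first establish the base case $r=1$ by direct computation modulo $4$ on products of generators, and then bootstrap to $r\geq 2$ via the squaring map $\Gamma_n[2^r]\to\Gamma_n[2^{r+1}]$, $A\mapsto A^2$, which under the level $r+1$ analogue of $\pi$ recovers $\pi(A)$ exactly when $r\geq 2$ (the correction $\pi(A)^2$ appearing only for $r=1$).
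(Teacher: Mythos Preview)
Your Newman--Smart reduction is correct and matches the paper: both arguments come down to showing $\pi_r(\Gamma_n[2^r])\subseteq M$. However, neither of your two proposed strategies for this inclusion works. The reorganization claim fails: the identity $T_u(T_v-I)T_u^{-1}=T_{T_uv}-I$ only conjugates a single nilpotent; it does not express a product $(T_u-I)(T_v-I)$ as an $\FF_2$-combination of single $T_w-I$'s. Concretely, $(T_u-I)(T_v-I)$ is the map $x\mapsto\langle v,u\rangle\langle x,v\rangle u$, which is not of the form $x\mapsto -\langle x,w\rangle w$ unless $u$ and $v$ are parallel, and in fact need not even lie in $\spm$ individually. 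Since $M$ is a \emph{proper} subspace of $\spm(V_n/2V_n)$ for $n\geq4$, no purely algebraic manipulation of the subset-sum expansion will force the higher-order terms into $M$ without using something specific about $\Gamma_n$. The squaring bootstrap also runs the wrong way: the identity $\pi_{r+1}(A^2)=\pi_r(A)$ for $r\geq2$ shows $\pi_r(\Gamma_n[2^r])\subseteq\pi_{r+1}(\Gamma_n[2^{r+1}])$, not the reverse, so even granting the base case you get no upper bound on $\pi_{r+1}(\Gamma_n[2^{r+1}])$.

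The paper avoids both difficulties by invoking an external fact you do not use: the classical equality $B_n^2=B_n[2]$ (Arnol'd's identification of $B_n[2]$ with the pure braid group, together with Moore's presentation of $S_n$), which gives $\Gamma_n^2=\Gamma_n[2]$ outright and hence settles $r=1$ immediately. Your proposed ``direct computation modulo $4$'' for $r=1$ would essentially amount to reproving this nontrivial result. The paper then argues that, because $\pi_r(T_v^{2^r})=T_v-I\bmod 2$ is independent of $r$, the truth of the lemma does not depend on $r\geq1$, reducing everything to this base case.
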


\begin{proof}
  When $r=0$ this follows from the equalities $\Gamma_n^1=\Gamma_n=\Gamma_n[1]$. Assume now that $r\geq1$. Let $\Gamma\coloneqq\langle\Gamma_n^{2^r},\Gamma_n[2^{r+1}]\rangle\leq\Gamma_n$. It suffices to show that $\Gamma/\Gamma_n[2^{r+1}]=\Gamma_n[2^r]/\Gamma_n[2^{r+1}]$.

  By \Cref{burau-as-symplectic} we may identify $\Gamma_n$ with its image in $\Sp(V_n)$, and hence $\Gamma_n[2^r]/\Gamma_n[2^{r+1}]$ with a subgroup of the quotient group $\Sp(V_n)[2^r]/\Sp(V_n)[2^{r+1}]$. The latter group is abelian and isomorphic to $\spm(V_n/2V_n)$ by \Cref{sp2g-equivariant}. Furthermore, the construction of the isomorphism is such that if $T\in\Sp(V_n)$ is a transvection, then the image of $T^{2^r}$ in $\Sp(V_n)[2^r]/\Sp(V_n)[2^{r+1}]$ is carried to the element $T-I\bmod 2$ of $\spm(V_n/2V_n)$.

  It follows that for each $n\geq2$, the correctness of \Cref{m-power-of-2} does not depend on the value of $r\geq1$. Hence, it suffices to prove it when $r=1$. This special case follows from the equality $B_n^2=B_n[2]$, which holds by combining work of Moore \cite{Moo97} and Arnol'd \cite{Arn68} as mentioned in the introduction.
\end{proof}

\subsection{Proof of the general case}

\begin{proof}[Proof of \Cref{level-m-from-level-2m-and-mth-powers}]
  Let $m\geq1$ and $n\geq2$. Write $m=2^rs$ where $s$ is odd. Let $\Gamma\coloneqq\langle\Gamma_n^m,\Gamma_n[2m]\rangle$. It suffices to show that $\Gamma/\Gamma_n[2m]=\Gamma_n[m]/\Gamma_n[2m]$.

  By the Chinese remainder theorem, the natural reduction maps define an isomorphism
  \[
    \Gamma_n/\Gamma_n[2m] \cong \Gamma_n/\Gamma_n[2^{r+1}] \times \Gamma_n/\Gamma_n[s].
  \]
  Recall that $\Gamma_n^m\leq\Gamma_n[m]$ by \Cref{mth-power-contained-in-mth-congruence}. The image of $\Gamma_n[m]$ in $\Gamma_n/\Gamma_n[s]$ is trivial since $s\mid m$. Hence, it suffices to show that $\Gamma_n^m$ and $\Gamma_n[m]$ have the same image in $\Gamma_n/\Gamma_n[2^{r+1}]$.

  If $T\in\Gamma_n$ is a transvection, then $T^m$ and $T^{2^r}$ agree modulo $2^{r+1}$. Hence $\Gamma_n^m$ and $\Gamma_n^{2^r}$ have the same image in $\Gamma_n/\Gamma_n[2^{r+1}]$. By \Cref{m-power-of-2} this implies $\Gamma_n^m$ and $\Gamma_n[2^r]$ have the same image in $\Gamma_n/\Gamma_n[2^{r+1}]$. Since $\Gamma_n^m\leq\Gamma_n[m]\leq\Gamma_n[2^r]$, we find that $\Gamma_n^m$ and $\Gamma_n[m]$ also have the same image in $\Gamma_n/\Gamma_n[2^{r+1}]$, as required.
\end{proof}

\section{Reduction of the main theorems to a result on transvections}\label{reduction}

In this section we reduce \Cref{maintheorem,maintheorem-for-3-and-4} to the following \namecref{final-theorem}.

\begin{theorem}\label{final-theorem}
  If $m\geq1$ and $n\geq2$, then $T_x^{2m}\in\Gamma_n^m$ for all $x\in W_n$.
\end{theorem}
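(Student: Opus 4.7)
The plan is to reduce the theorem to primitive $x\in W_n$ and then exploit normality of $\Gamma_n^m$ in $\Gamma_n$ together with a description of the $\Gamma_n$-orbit of $c_1$ in $W_n$.

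For the reduction, every transvection is unipotent with $(T_y-I)^2=0$, so $T_y^k=I+k(T_y-I)$, and for any $d\in\ZZ$,
\[
T_{dy}(v)=v-d^2\langle v,y\rangle y=T_y^{d^2}(v).
\]
Hence $T_{dy}^{2m}=(T_y^{2m})^{d^2}$, and it suffices to treat primitive $x\in W_n$.

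For primitive $x$, observe that $\Gamma_n^m$ is the normal closure in $\Gamma_n$ of $T_{c_1}^m=\rho_n(\sigma_1)^m$, so $T_v^m\in\Gamma_n^m$ for every $v$ in the orbit $\mathcal{O}\coloneqq\Gamma_n\cdot c_1\subseteq W_n$; moreover each such $T_v=\rho_n(\sigma)$ (with $\sigma$ a half-twist) itself lies in $\Gamma_n$, and can be used as a conjugator within $\Gamma_n$ that preserves $\Gamma_n^m$. If $x\in\mathcal{O}$ we already obtain $T_x^m$ and hence $T_x^{2m}$ in $\Gamma_n^m$. Using A'Campo's description of $\Gamma_n\leq\Sp(V_n)$ as the stabilizer (for $n\geq5$) of a $\ZZ/2\ZZ$-quadratic form $q$ refining the mod-$2$ symplectic form, the remaining case is that $x$ is primitive with $q(x)=0$, in which case $T_x\notin\Gamma_n$ but $T_x^2\in\Gamma_n$. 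To treat this case I would choose $v_1,v_2\in\mathcal{O}$ with $v_1+v_2=2x$ and use the nilpotent calculus $T_v^m=I+mS_v$ ($S_v^2=0$) to compare $T_{v_1}^mT_{v_2}^m\in\Gamma_n^m$ with $T_{2x}^m=T_x^{4m}$; the discrepancy is a rank-at-most-$2$ correction built from the $v_i$, and, after absorbing it into further $m$-th powers of transvections along vectors of $\mathcal{O}$ and performing commutator manipulations inside $\Gamma_n^m$, one should be left with $T_x^{2m}\in\Gamma_n^m$. The factor of $2$ in ``$2m$'' reflects precisely this doubling from $2x=v_1+v_2$.

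The main obstacle is executing this last step cleanly: producing decompositions $2x=v_1+v_2$ with $v_i\in\mathcal{O}$ for every primitive $x\in W_n\setminus\mathcal{O}$, and doing the transvection-product bookkeeping that cancels the rank-$2$ correction. I expect small-$n$ cases (particularly $n=2,3,4$) to require separate verification where $W_n$ is too low-dimensional for generic decompositions to exist, and I would treat them directly by hand using the symplectic (or $\SL_2$) identifications from \Cref{burau-as-symplectic}.
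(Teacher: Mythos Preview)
Your proposal contains a genuine gap precisely at the step you flag as ``the main obstacle,'' and as written the idea does not close. Writing $2x=v_1+v_2$ with $v_1,v_2\in\mathcal{O}$ and comparing $T_{v_1}^mT_{v_2}^m$ with $T_{2x}^m$ aims at $T_x^{4m}$, not $T_x^{2m}$: even in the best case where $v_1\perp v_2$, the clean identity one gets is $T_{v_1}^{2m}T_{v_2}^{2m}=T_{v_1+v_2}^mT_{v_1-v_2}^m=T_x^{4m}T_{v_1-v_2}^m$, and there is no mechanism in your sketch for halving the exponent from $4m$ to $2m$. The ``rank-at-most-2 correction'' and ``commutator manipulations'' are doing all the work and are left unspecified. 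There are also loose ends in the orbit dichotomy: for even $n$ the form on $W_n$ is degenerate (with radical $\ZZ w$), so the ``primitive with $q(x)=0$'' case is not exhaustive, and A'Campo's description does not directly apply there; and the claim that $T_x^2\in\Gamma_n$ for such $x$ uses that $\Gamma_n$ equals the full $q$-stabilizer, which is only available for $n\geq 5$.

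The paper's proof takes a different route that avoids these issues. It proceeds by induction on $n$ (not a global orbit dichotomy) and rests on the commuting-transvection identity
\[
T_x^2\,T_{x+y}^2 \;=\; T_{2x+y}\,T_y \qquad (\langle x,y\rangle=0),
\]
so that $T_x^{2m}T_{x+y}^{2m}=T_{2x+y}^m\,T_y^m$. The point is that one chooses $y\in\ZZ c_{n-1}$ orthogonal to $x$ (after first arranging $\lambda_{n-2}=0$ via a Euclidean step inside $\langle T_{c_{n-2}},T_{c_{n-1}}\rangle$) so that $T_y^m\in\Gamma_n^m$ trivially, while Wajnryb's orbit criterion (\Cref{wajnryb-prop}) places $2x+y$ in $\Gamma_n\cdot c_1$, giving $T_{2x+y}^m\in\Gamma_n^m$. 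This yields the equivalence $T_x^{2m}\in\Gamma_n^m\iff T_{x+y}^{2m}\in\Gamma_n^m$, and iterating these moves $L_\pm$ drives the last coordinate $\lambda_{n-1}$ to zero, landing $x$ in $W_{n-2}$ where the inductive hypothesis applies. The factor $2$ in ``$2m$'' arises exactly from the squares on the left of the displayed identity, not from doubling $x$ as in your decomposition; that is why the paper's argument produces $T_x^{2m}$ directly rather than $T_x^{4m}$.
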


We prove \Cref{final-theorem} in \Cref{final-proof}.

\subsection{Proof of \Cref{maintheorem-for-3-and-4} from \Cref{final-theorem}}

\begin{proof}
  As mentioned in \Cref{stepping-stone}, the case $n=3$ immediately follows from \Cref{level-m-from-level-2m-and-mth-powers}, so it remains to handle $n=4$.

  Let $m\geq1$ and let $R\subseteq B_3[m]$ be such that the normal closure of $\bar{\rho}_3(R)$ in $\SL_2(\ZZ)$ contains $\SL_2(\ZZ)[2m]$. Let $\Gamma$ be the normal closure of $\rho_4(R)$ in $\Gamma_4$.

  Since $\BI_4$ is normally generated by $(\sigma_1\sigma_2)^6$ by \cite[Theorem C]{BMP15}, it suffices to show that $\langle\Gamma,\Gamma_4^m\rangle=\Gamma_4[m]$. Hence, by \Cref{level-m-from-level-2m-and-mth-powers} it suffices to show that $\Gamma_4[2m]\leq\langle\Gamma,\Gamma_4^m\rangle$.

  Recall that we may view $\Gamma_4$ as a subgroup of $\Sp(V_4)_w$ by \Cref{burau-as-symplectic}. As discussed in \Cref{sp2g-background}, there is a natural homomorphism $\Sp(V_4)[2m]_w\to\Sp(w^\perp/w)[2m]$. By \Cref{kernel-gen-by-transvections} and \Cref{final-theorem}, $\Gamma_4^m$ contains the kernel of this homomorphism. So it suffices to show that the image of $\Gamma$ in $\Sp(w^\perp/w)$ contains $\Sp(w^\perp/w)[2m]$.

  To prove this, we note that the following diagram commutes.
  \[\begin{tikzcd}
      {B_4} & {\Gamma_4} &[-2em] {\Sp(V_4)_w} \\
      {B_3} & {\SL_2(\ZZ)} & {\Sp(w^\perp/w)}
      \arrow["{\rho_4}", from=1-1, to=1-2]
      \arrow["\varphi"', from=1-1, to=2-1]
      \arrow[hook, from=1-2, to=1-3]
      \arrow[from=1-3, to=2-3]
      \arrow["{\bar{\rho}_3}"', from=2-1, to=2-2]
      \arrow["\cong"{description}, draw=none, from=2-3, to=2-2]
    \end{tikzcd}\]

  Here $\varphi\colon B_4\to B_3$ denotes the special homomorphism given by $\sigma_1,\sigma_3\mapsto\sigma_1$ and $\sigma_2\mapsto\sigma_2$, and the isomorphism $\Sp(w^\perp/w)\cong\SL_2(\ZZ)$ arises from using the images of $c_1$ and $c_2$ in $w^\perp/w$ as a $\ZZ$-basis to identify $w^\perp/w$ with $\ZZ^2$.

  Since the standard inclusion $B_3\to B_4$ is a section of $\varphi$, the hypothesis that the normal closure of $\rho_3(R)$ in $\SL_2(\ZZ)$ contains $\SL_2(\ZZ)[2m]$ implies that the image of $\Gamma$ in $\Sp(w^\perp/w)$ contains $\Sp(w^\perp/w)[2m]$, as required.
\end{proof}

\subsection{Proof of \Cref{maintheorem} from \Cref{final-theorem}}
  
\begin{proof}
  When $n=2$ we have $B_2[m]=B_2^m$ for all $m\geq1$ (see \Cref{coxeter-remark}). As noted in the introduction, when combined with Miller's result \cite{Mil02}, \Cref{maintheorem-for-3-and-4} implies \Cref{maintheorem} in the cases $n=3,4$.

  Assume now that $n\geq5$. Recall that by \Cref{burau-as-symplectic} we may view $\Gamma_n$ as a subgroup of $\Sp(V_n)$, and if $n$ is even then $\Gamma_n$ lies in the stabilizer subgroup $\Sp(V_n)_w$. By \Cref{burau-as-symplectic,gen-by-transvections} we have
  \[
    \langle T_x^{2m} : x\in W_n \rangle = \begin{cases} \Sp(V_n)[2m] & n \text{ odd} \\ \Sp(V_n)[2m]_w & n \text{ even.} \end{cases}
  \]
  Hence, \Cref{final-theorem} implies that $\Gamma_n[2m]=\langle T_x^{2m}:x\in W_n\rangle\leq\Gamma_n^m$. Therefore, by \Cref{level-m-from-level-2m-and-mth-powers} we have $\Gamma_n[m]=\Gamma_n^m$, and hence $B_n[m]=\langle\BI_n,B_n^m\rangle$.
\end{proof}

\section{Proof of the result on transvections}\label{final-proof}

It remains to prove \Cref{final-theorem}, namely that for all $m\geq1$ and $n\geq2$
\[
  T_x^{2m} \in \Gamma_n^m \qquad \forall x\in W_n=\ZZ c_1+\cdots+\ZZ c_{n-1}.
\]

Many of the ingredients in the proof of \Cref{final-theorem} are adapted from arguments that appear in Section 3 of Janssen's thesis \cite{Jan85a}. Janssen only considers the case of $m=1$, but works in a more general setting than the integral Burau representation. Our main contributions are noticing that some proofs there can be adapted to work for $m>1$, as well as simplifying the arguments for the special case of the integral Burau representation.

\begin{proof}[Proof of \Cref{final-theorem}]
  We proceed by induction on $n$. For $n=2$ we have $W_2=\ZZ c_1$, hence the following stronger statement holds: $T_x^m\in\Gamma_2^m$ for all $x\in W_2$. Assume now that $n\geq3$.

  Let $x\in W_n$ and write $x=\lambda_1c_1+\cdots+\lambda_{n-1}c_{n-1}$, where $\lambda_1,\ldots,\lambda_{n-1}\in\ZZ$.

  \subsection{Step one: it suffices to assume $\lambda_{n-2}=0$.}
  To reduce to the case where $\lambda_{n-2}=0$, we will construct $x'$ in the $\Gamma_n$-orbit of $x$ such that
  \[
    x' \in \ZZ c_1 + \cdots + \ZZ c_{n-3} + \ZZ c_{n-1}.
  \]
  Since $T_x$ and $T_{x'}$ will be $\Gamma_n$-conjugate, and $\Gamma_n^m$ is a normal subgroup of $\Gamma_n$, this will show that $T_x^{2m}\in\Gamma_n^m\iff T_{x'}^{2m}\in\Gamma_n^m$.

  Consider the $\ZZ$-linear map $A\colon W_n\to\ZZ^2$ defined as follows. Let
  \[
    Ac_{n-2} \coloneqq \begin{pmatrix} 0 \\ 1 \end{pmatrix}, \quad Ac_{n-1} \coloneqq\begin{pmatrix} -1 \\ 0 \end{pmatrix}, \qquad \text{and if $n\geq4$:} \quad Ac_{n-3} \coloneqq \begin{pmatrix} 1 \\ 0 \end{pmatrix},
  \]
  and $Ac_1 \coloneqq \cdots \coloneqq Ac_{n-4} \coloneqq 0$. A straightforward calculation shows that
  \[
    A \circ T_{c_{n-2}}|_{W_n} = \begin{pmatrix} 1 & 0 \\ -1 & 1 \end{pmatrix} \circ A, \qquad A \circ T_{c_{n-1}}|_{W_n} = \begin{pmatrix} 1 & 1 \\ 0 & 1 \end{pmatrix} \circ A.
  \]
  Let $d$ be the $\gcd$ of the two entries of $Ax$. By the Euclidean algorithm, there exists an element $B\in\langle T_{c_{n-2}},T_{c_{n-1}}\rangle\leq\Gamma_n$ such that $ABx=\begin{psmallmatrix}d\\0\end{psmallmatrix}$. Hence $x'\coloneqq Bx$ lies in
  \[
    A^{-1}(\ZZ\begin{psmallmatrix}1\\0\end{psmallmatrix}) = \ZZ c_1 + \cdots + \ZZ c_{n-3} + \ZZ c_{n-1}.
  \]

  \subsection{Step two: $L_+$ and $L_-$}

  By the previous section, we may assume $\lambda_{n-2}=0$. We will define self maps $L_+$ and $L_-$ on $W_n$ such that $T_x^{2m}\in\Gamma_n^m\iff T_{L_{\pm}x}^{2m}\in\Gamma_n^m$, assuming $\lambda_{n-2}=0$.

  We will make use of the following \namecref{wajnryb-prop} due to Wajnryb \cite{Waj80}.
  \begin{proposition}[{\cite[Proposition 1]{Waj80}}]\label{wajnryb-prop}
    Let $n\geq3$. If $y,y'\in W_n$ are such that
    \[
      \langle y,W_n \rangle = \langle y',W_n \rangle = \ZZ,
    \]
    and $y-y'\in 2W_n$, then $y$ and $y'$ are in the same $\Gamma_n$-orbit.
  \end{proposition}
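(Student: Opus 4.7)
My plan is to prove the proposition by induction on $n$, with base case $n=3$. For $n=3$, $\Gamma_3\cong\SL_2(\ZZ)$ acts transitively on primitive vectors in $W_3\cong\ZZ^2$ by the ordinary Euclidean algorithm, giving a statement strictly stronger than what the proposition demands.

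For the inductive step, assume the proposition holds for $n-1$, and let $y,y'\in W_n$ be primitive with $y-y'\in 2W_n$. First, I would apply a Euclidean-algorithm reduction analogous to Step 1 in the proof of \Cref{final-theorem}: using the transvections $T_{c_{n-2}},T_{c_{n-1}}\in\Gamma_n$, one finds $B\in\Gamma_n$ with $By\in W_{n-1}\coloneqq\ZZ c_1+\cdots+\ZZ c_{n-2}$, the sublattice stabilized by the image under $\rho_n$ of the standard inclusion $B_{n-1}\hookrightarrow B_n$. Applying the same $B$ to $y'$ preserves $By-By'\in 2W_n$, but $By'$ need not lie in $W_{n-1}$.

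Since $W_n=W_{n-1}\oplus\ZZ c_{n-1}$ as lattices, we have $2W_n\cap W_{n-1}=2W_{n-1}$, so if we manage to push both vectors into $W_{n-1}$ without leaving the mod-$2W_n$ class, then they are automatically congruent mod $2W_{n-1}$. To shove $By'$ back into $W_{n-1}$ without leaving its mod-$2W_n$ class, I would use $\Gamma_n[2]=\langle T_v^{2}:v\text{ a Burau vector}\rangle$ (equality by the Moore--Arnol'd identification $B_n^2=B_n[2]$): each such $T_v^2$ shifts vectors by $-2\langle\,\cdot\,,v\rangle v\in 2W_n$, so the $\Gamma_n[2]$-orbit of $By'$ lies entirely in its mod-$2W_n$ class. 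A suitable finite product of $T_v^2$'s, with $v$'s chosen from the $\Gamma_n$-orbit of $c_{n-1}$ (and its nearby Burau translates), performs a Euclidean-style clearing of the $c_{n-1}$-component of $By'$, producing $\tilde y'\in W_{n-1}$ congruent to $By$ modulo $2W_{n-1}$. The inductive hypothesis applied inside the copy of $\Gamma_{n-1}$ stabilizing $W_{n-1}$ then produces the desired $\gamma\in\Gamma_n$.

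The main obstacle I expect is Step 2: verifying that the Burau-vector orbit is rich enough in the $c_{n-1}$-direction to carry out the clearing within $\Gamma_n[2]$, and more subtly, ensuring that the relevant pairings $\langle By',v\rangle$ can be arranged to equal the right values for the iterated Euclidean reduction to succeed. A second, technical point is that primitivity of $\tilde y$ in $W_n$ does not automatically imply primitivity in $W_{n-1}$ (the pairing against $c_{n-1}$ may contribute essentially to the image of $\langle\tilde y,\,\cdot\,\rangle$), so I would refine the normalization in Step 1 to build in the extra primitivity, perhaps by further Euclidean manipulation using $T_{c_{n-3}},T_{c_{n-2}}$ before invoking induction.
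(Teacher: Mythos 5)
The paper does not actually prove this proposition: it is imported verbatim from \cite[Proposition 1]{Waj80} and used as a black box, so there is no in-paper argument to compare against. Judged on its own, your proposal is an outline rather than a proof, and the two points you yourself flag as ``obstacles'' are precisely where all the content lives; neither is resolved. (i) The clearing step: you must move $By'$ into $\ZZ c_1+\cdots+\ZZ c_{n-2}$ using only elements of $\Gamma_n^2$ so as to preserve its class mod $2W_n$. A square $T_v^2$ changes the $c_{n-1}$-coefficient of $u$ by $-2\langle u,v\rangle\mu(v)$, where $\mu(v)$ is the $c_{n-1}$-coefficient of $v$, and you give no argument that the achievable values of $\langle u,v\rangle\mu(v)$, as $v$ ranges over the orbit of $c_1$, generate enough of $\ZZ$ to cancel the (even) coefficient. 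For $n$ even this is genuinely delicate: the form on $W_n$ is degenerate with radical $\ZZ w$, so pairings are blind to the $w$-direction (relatedly, your restatement of the hypothesis as ``$y$ primitive'' is strictly weaker than $\langle y,W_n\rangle=\ZZ$ when $n$ is even, e.g.\ $y=w$). (ii) The inductive hypothesis can genuinely fail after your normalization: $\tilde y\in\ZZ c_1+\cdots+\ZZ c_{n-2}$ with $\langle\tilde y,W_n\rangle=\ZZ$ need not pair onto $\ZZ$ against the smaller lattice. Concretely, $y=c_1+2c_2+c_3$ satisfies $\langle y,W_5\rangle=\ZZ$ but $\langle y,\ZZ c_1+\ZZ c_2+\ZZ c_3\rangle=2\ZZ$, so induction cannot be invoked. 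Your proposed repair (``further Euclidean manipulation'') is not carried out, and any such renormalization must be applied to both vectors simultaneously without leaving their common class mod $2W_n$, which feeds back into difficulty (i).

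There is also a third, unacknowledged hole at the very start: your reduction of $y$ into $\ZZ c_1+\cdots+\ZZ c_{n-2}$ is not actually ``analogous to Step 1'' of the proof of \Cref{final-theorem}. That step acts on the pair $(\lambda_{n-3}-\lambda_{n-1},\lambda_{n-2})$ and clears $\lambda_{n-2}$; it does not clear $\lambda_{n-1}$. Since $T_{c_{n-1}}$ changes $\lambda_{n-1}$ only by multiples of $\lambda_{n-2}$, clearing the last coefficient needs its own argument (and must again use the hypothesis $\langle y,W_n\rangle=\ZZ$, as the fixed vector $w$ for $n$ even shows it cannot hold unconditionally). The base case $n=3$ is correct, and the overall skeleton (induct on $n$, descend to the sublattice preserved by $\Gamma_{n-1}$, use $\Gamma_n^2=\Gamma_n[2]$ to preserve the mod-$2$ class) is reasonable, but as written the argument has three unfilled gaps, each of which requires real work.
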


  Now, let
  \[
    y_{\pm} = (\pm1 + 2(\lambda_{n-3}-\lambda_{n-1}))c_{n-1},
  \]
  and define $L_{\pm}x$ to be the quantity $x+y_{\pm}$. Note that $T_{y_{\pm}}^m\in\Gamma_n^m$, since $y_{\pm}\in\ZZ c_{n-1}$. Furthermore, $\langle 2x+y_{\pm},c_{n-2}\rangle=\mp1$ and $2x+y_{\pm}-c_{n-1}\in 2W_n$. By \Cref{wajnryb-prop}, $2x+y_{\pm}$ is in the $\Gamma_n$-orbit of $c_{n-1}$, and hence $T_{2x+y_{\pm}}^m\in\Gamma_n^m$.

  Since $\lambda_{n-2}=0$, we have $\langle x,y_{\pm}\rangle=0$, and hence the following identity of commuting transvections holds:
  \[
    T_x^2T_{x+y_{\pm}}^2 = T_{2x+y_{\pm}}T_{y_{\pm}} \implies T_x^{2m}T_{L_{\pm}x}^{2m} = T_{2x+y_{\pm}}^mT_{y_{\pm}}^m \in \Gamma_n^m.
  \]
  In particular, we have $T_x^{2m}\in\Gamma_n^m\iff T_{L_{\pm}x}^{2m}\in\Gamma_n^m$.

  \subsection{Step three: it suffices to also assume $\lambda_{n-1}=0$}

  Let $L_+$ and $L_-$ be the operations defined in the previous subsection. We note two easily checkable facts:
  \begin{enumerate}[label=(\roman*)]
  \item $L_+x-x$ is an odd multiple of $c_{n-1}$,
  \item $L_{\pm}L_{\mp}x=x\pm 2c_{n-1}$.
  \end{enumerate}
  It follows from (i) that it suffices to assume that $\lambda_{n-1}$ is even, and then it follows from (ii) that it suffices to assume that $\lambda_{n-1}=0$.

  \subsection{Step four: use the inductive hypothesis}

  By the previous steps, it suffices to prove for all $n\geq3$ that
  \[
    T_x^{2m} \in \Gamma_n^m \qquad \forall x\in\ZZ c_1 + \cdots + \ZZ c_{n-3}.
  \]
  When $n=3$ the sum $\ZZ c_1+\cdots+\ZZ c_{n-3}$ is the empty sum, so this is immediate. Assume now that $n\geq4$. If $\sigma\in B_{n-2}$, then $\rho_n(\sigma)=\rho_{n-2}(\sigma)\oplus\begin{psmallmatrix}1&0\\0&1\end{psmallmatrix}$. Furthermore, if $x\in\ZZ c_1+\cdots+\ZZ c_{n-3}$ then the transvection $T_x$ preserves $\ZZ e_1+\cdots+\ZZ e_{n-2}$, and fixes both $e_{n-1}$ and $e_n$.

  By induction, for all $x\in\ZZ c_1+\cdots+\ZZ c_{n-3}$ we have
  \[
    T_x^{2m} \in \left\{ \begin{pmatrix} A & 0 & 0 \\ 0 & 1 & 0 \\ 0 & 0 & 1 \end{pmatrix} : A \in \Gamma_{n-2}^m \right\} \leq \Gamma_n^m \cap \rho_n(B_{n-2}).\qedhere
  \]
\end{proof}

\printbibliography{}

\end{document}